\numberwithin{equation}{section} \allowdisplaybreaks
\begin{document}
\newtheorem{theorem}{Theorem}[section]
\newtheorem{defin}{Definition}[section]
\newtheorem{prop}{Proposition}[section]
\newtheorem{corol}{Corollary}[section]
\newtheorem{lemma}{Lemma}[section]
\newtheorem{rem}{Remark}[section]
\newtheorem{example}{Example}[section]
\title{Hamiltonian vector fields on almost symplectic
manifolds} \author{{\small by}\vspace{2mm}\\Izu Vaisman}
\date{} \maketitle
{\def\thefootnote{*}\footnotetext[1]%
{{\it 2000 Mathematics Subject Classification: 53D15}.
\newline\indent{\it Key words and phrases}: Almost symplectic manifold; Hamiltonian vector field.}} \begin{center}
\begin{minipage}{12cm} A{\footnotesize BSTRACT. Let $(M,\omega)$ be an almost symplectic manifold ($\omega$ is a non degenerate, not closed, $2$-form). We say that a vector field $X$ of $M$ is locally Hamiltonian if $L_X\omega=0,d(i(X)\omega)=0$, and it is Hamiltonian if, furthermore, the $1$-form $i(X)\omega$ is exact. Such vector fields were considered in \cite{FS}, under the name of strongly Hamiltonian, and a corresponding action-angle theorem was proven. Almost symplectic manifolds may have few, non-zero, Hamiltonian vector fields or even none. Therefore, it is important to have examples and it is our aim to provide such examples here. We also obtain some new general results. In particular, we show that the locally Hamiltonian vector fields generate a Dirac structure on $M$ and we state a reduction theorem of the Marsden-Weinstein type. A final section is dedicated to almost symplectic structures on tangent bundles.} \end{minipage} \end{center} \vspace*{5mm}
\section{Introduction} All the objects that we consider are
assumed to be $C^\infty$-smooth and we follow the usual notation of differential geometric literature \cite{KN}.

The classical framework of Hamiltonian dynamics
is a symplectic manifold $(M^{2n},\omega)$,
where $\omega$ is a non degenerate, closed $2$-form
\cite{{LM},{OR}}. This framework was also extended to Poisson and Dirac manifolds \cite{C} and to similar structures on Lie
algebroids \cite{CMM}. In all these cases an integrability
condition that generalizes the closure of the symplectic form plays an essential role.

In \cite{FS}, the authors extend an important
theorem of Hamiltonian dynamics, existence of action-angle coordinates, to almost symplectic manifolds $(M^{2n},\omega)$,
where the $2$-form $\omega$ is still non degenerate but not closed.
I have no knowledge neither of earlier studies of
Hamiltonian fields on almost symplectic manifolds nor of
further developments, which, probably, is due to the fact that almost symplectic manifolds may have very few suitable Hamiltonian vector fields
(called strongly Hamiltonian in \cite{FS}), if at all; in \cite{FS} there are no concrete examples beyond the
general action-angle coordinates expression of the $2$-form $\omega$.

Nevertheless, some almost-symplectic manifolds may carry
interesting Hamiltonian fields. The aim of this note is to give such examples as well as some new general results concerning Hamiltonian vector fields on almost-symplectic manifolds. The latter results include the existence of a Dirac structure generated by locally Hamiltonian vector fields and a Marsden-Weinstein reduction theorem.
\section{Definitions and general results}
We define a suitable notion of Hamiltonian vector field of an almost symplectic manifold as follows.
\begin{defin}\label{defHas} {\rm Let $(M^{2n},\omega)$ be an almost symplectic manifold. A vector field $X$ on $M$ will be called a {\it Hamiltonian vector field} if
\begin{equation}\label{campH}
L_X\omega=0,\;i(X)\omega=-df\hspace{3mm}
(f\in C^\infty(M)),\end{equation}} where $L$ is the Lie derivative.
\end{defin}

In \cite{FS} the vector fields that satisfy (\ref{campH}) were called strongly Hamiltonian.
We will say that $X$ is the Hamiltonian vector field of $f$ and denote $X=X_f$. The function $f$ is a {\it Hamiltonian function} of $X$ and it is defined up to a constant. It follows easily that, if $f,h\in C^\infty(M)$ are Hamiltonian functions
with Hamiltonian fields $X_f,X_h$, the vector field
$$X_{fh}=fX_h+hX_f$$ is a Hamiltonian vector field with the product $fh$ as a Hamiltonian function. The algebra of the Hamiltonian functions on $M$ (with usual functions product) will be denoted by $\mathcal{H}(M,\omega)$.

Conditions (\ref{campH}) imply $di(X)\omega=0$. Accordingly, a vector field $X$ such that
\begin{equation}\label{camplocH}
di(X)\omega=0,\;L_X\omega=0\end{equation} will be called a {\it
locally Hamiltonian vector field}.
Due to the classical formula $L_X=di(X)+i(X)d$,
conditions (\ref{camplocH}) may be replaced by
\begin{equation}\label{camplocH1}
di(X)\omega=0,\;i(X)d\omega=0.\end{equation}

In the symplectic case $d\omega=0$ and we regain the classical definitions. Some of the well known symplectic properties hold in the general case too. For two locally Hamiltonian vector fields $X,Y$, the commutation formula
$$i([X,Y])=L_Xi(Y)-i(Y)L_X$$ yields
$$i([X,Y])\omega=-d(\omega(X,Y)).$$ Therefore, $[X,Y]$ is a
Hamiltonian vector field with Hamiltonian function
$\omega(X,Y)$. Furthermore, for $f,h\in\mathcal{H}(M,\omega)$, we get a skew-symmetric bracket
\begin{equation}\label{Poissbr}\{f,h\}=\omega(X_f,X_h)=X_fh=-X_hf,
\end{equation} such that
$$X_{\{f,h\}}=[X_f,X_h].$$
\begin{defin}\label{tgHas} {\rm A
vector $X_0\in T_{x_0}M$, $x_0\in M$,
will be called a {\it Hamiltonian tangent vector} if there
exists a locally Hamiltonian vector field $X$ defined on an
open neighborhood of $x_0$ such that
$X(x_0)=X_0$. A submanifold $N\subseteq M$ such that all its tangent vectors are Hamiltonian will be called a {\it Hamiltonian submanifold}.}\end{defin}

We will denote by $H_{x_0}M\subseteq T_{x_0}M$ the real linear subspace of
Hamiltonian tangent vectors at $x_0$. In the symplectic
case $H_{x_0}M=T_{x_0}M$ but, in the general case non-Hamiltonian tangent vectors may exist. By (\ref{camplocH1}), the condition $i(X_0)(d\omega)_{x_0}=0$ is a necessary condition for $X_0\in H_{x_0}M$, which, however, may not be sufficient. A better result is given by
\begin{prop}\label{necsufH1} The vector $X_0\in T_{x_0}M$ is a Hamiltonian tangent vector iff the system of partial differential equations
$$\omega^{ij}(d\omega)_{ihk}\frac{\partial f}{\partial x^j}=0,$$
where $(x^j)$ are local coordinates around $x_0$,
has local, differentiable solutions $f(x^j)$ such that $$\left.\frac{\partial f}{\partial x^i}\right|_{x_0}=\omega_{ij}(x_0)\xi^j_0,\hspace{4mm} X_0=\xi^j_0\left.\frac{\partial}{\partial x^j}\right|_{x_0}.$$\end{prop}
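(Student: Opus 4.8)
The plan is to unwind the definitions so that being Hamiltonian becomes an existence statement for a PDE system, and to identify that system explicitly in coordinates. Recall from (\ref{camplocH1}) that $X$ is locally Hamiltonian iff $d(i(X)\omega)=0$ and $i(X)(d\omega)=0$. The first condition, $d(i(X)\omega)=0$, is locally equivalent (by the Poincar\'e lemma, on a contractible neighborhood of $x_0$) to $i(X)\omega=-df$ for some function $f$; this $f$ is exactly the Hamiltonian function we seek, and the relation $i(X)\omega=-df$ reads in coordinates $\omega_{ij}\xi^j=-\partial f/\partial x^i$, i.e.\ $\xi^j=-\omega^{ji}\partial f/\partial x^i$ where $(\omega^{ij})$ is the inverse matrix of $(\omega_{ij})$ (nondegeneracy of $\omega$). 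So once $f$ is chosen, $X$ is completely determined by $X=\xi^j\partial/\partial x^j$ with $\xi^j=-\omega^{ji}\partial_i f$; conversely any such $f$ produces a vector field whose contraction with $\omega$ is automatically closed.

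The substance is the remaining condition $i(X)(d\omega)=0$. Writing $d\omega$ with components $(d\omega)_{ihk}$ (totally antisymmetric), this condition is $\xi^j(d\omega)_{jhk}=0$ for all $h,k$. Substituting $\xi^j=-\omega^{ji}\partial_i f$ gives, up to sign and a relabeling of indices, precisely the stated system $\omega^{ij}(d\omega)_{ihk}\,\partial f/\partial x^j=0$. Thus: a Hamiltonian vector field $X$ with $X(x_0)=X_0$ defined near $x_0$ exists if and only if there is a local function $f$ solving this linear first-order PDE system in a neighborhood of $x_0$, with the additional pointwise initial requirement that $X_f(x_0)=X_0$, which, after contracting $X_0=\xi_0^j\partial/\partial x^j$ with $\omega$ at $x_0$, translates into $\partial f/\partial x^i|_{x_0}=\omega_{ij}(x_0)\xi_0^j$. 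This is exactly the claimed characterization.

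Concretely I would argue the two implications. For necessity: given a locally Hamiltonian $X$ near $x_0$ with $X(x_0)=X_0$, shrink to a contractible neighborhood, produce $f$ from $d(i(X)\omega)=0$, observe $\xi^j=-\omega^{ji}\partial_i f$, rewrite $i(X)(d\omega)=0$ as the PDE, and read off the value of $df$ at $x_0$ from $i(X_0)\omega_{x_0}=-df_{x_0}$. For sufficiency: given such an $f$ on a neighborhood of $x_0$, define $X:=X_f$ by $\xi^j=-\omega^{ji}\partial_i f$; then $i(X)\omega=-df$ is closed by construction, $i(X)(d\omega)=0$ by the PDE, hence by (\ref{camplocH1}) $X$ is locally Hamiltonian, and the initial condition on $\partial f/\partial x^i|_{x_0}$ forces $X(x_0)=X_0$. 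The only genuinely delicate point is bookkeeping: one must verify that the two conditions in (\ref{camplocH1}) really are captured by ``closedness of $df$'' (automatic) together with the single displayed PDE system, i.e.\ that no information is lost when passing from $i(X)(d\omega)=0$ to its coordinate form after the substitution. There is no analytic obstacle—existence of solutions is left as a PDE question, not asserted—so the main (mild) obstacle is purely the index computation showing $i(X_f)(d\omega)=0\iff\omega^{ij}(d\omega)_{ihk}\partial_j f=0$, which is a routine contraction once the sign conventions for $(d\omega)_{ihk}$ and $\omega^{ij}$ are fixed.
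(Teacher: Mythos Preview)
Your argument is correct and is essentially the same as the paper's: define $X=\sharp_\omega df$ so that $d(i(X)\omega)=0$ is automatic, recognize the displayed PDE as the coordinate form of $i(X)d\omega=0$, and use the initial condition on $\partial f/\partial x^i|_{x_0}$ to ensure $X(x_0)=X_0$. The paper's proof is simply the two-sentence version of what you wrote out in detail; the only cosmetic point is that your coordinate formula $\omega_{ij}\xi^j=-\partial_i f$ has the opposite sign from the paper's stated initial condition $\partial_i f|_{x_0}=\omega_{ij}(x_0)\xi_0^j$ (a convention issue in which slot of $\omega$ is contracted), which you yourself flag as routine bookkeeping.
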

\begin{proof} The required initial conditions ensure that $X=\sharp_\omega df$ is an extension of $X_0$ such that $d(i(X)\omega)=0$ and the system of equations is the expression of the condition $i(X)d\omega=0$.\end{proof}

The necessary condition $i(X_0)(d\omega)_{x_0}=0$ implies
\begin{equation}\label{dH0}d\omega(X_{x_0},Y_{x_0},Z_{x_0})=0,\;\; \forall
X_{x_0},Y_{x_0},Z_{x_0}\in H_{x_0}M,\end{equation}
therefore, if $\iota:N\hookrightarrow M$ is a Hamiltonian submanifold, $d\iota^*\omega=0$.
Furthermore, (\ref{dH0}) implies the Jacobi identity for the bracket (\ref{Poissbr}), hence, (\ref{Poissbr}) is a Poisson bracket that defines a Poisson algebra structure on the subset $\mathcal{H}(M,\omega)$ of Hamiltonian functions.

Following is a more significant result
\begin{theorem}\label{Dirac} The field of planes $H_{x}M$ ($x\in M$) is a generalized foliation $H$, which, together with the restrictions of $\omega$ to the leaves of $H$ produces a Dirac structure $\mathcal{D}_\omega$ on $M$. \end{theorem}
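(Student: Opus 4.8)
The plan is to establish the statement in three stages: first that $H$ is a generalized foliation, then that each of its leaves inherits a closed $2$-form from $\omega$, and finally that this presymplectic foliation produces a Dirac structure $\mathcal{D}_\omega$.

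For the first stage, let $\mathcal{L}$ denote the real Lie algebra of locally Hamiltonian vector fields defined on open subsets of $M$ — it is closed under the bracket by the commutation identity recalled before Definition \ref{tgHas} — and note that $H_xM=\{X(x):X\in\mathcal{L}\ \text{defined near}\ x\}$. The key point is that $\mathcal{L}$ is invariant under its own flows: if $\phi_t$ is the local flow of $X\in\mathcal{L}$, then $\phi_t^*\omega=\omega$ since $L_X\omega=0$, whence for $Y\in\mathcal{L}$ one has $L_{(\phi_t)_*Y}\omega=(\phi_t)_*\big(L_Y(\phi_t^*\omega)\big)=(\phi_t)_*(L_Y\omega)=0$ and $d\,i\big((\phi_t)_*Y\big)\omega=(\phi_t)_*\big(d\,i(Y)\omega\big)=0$, so $(\phi_t)_*Y\in\mathcal{L}$. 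Thus the smooth singular distribution $H$ is generated by $\mathcal{L}$ and is invariant under all flows of vector fields in $\mathcal{L}$, so by the Stefan--Sussmann orbit theorem the orbits of $\mathcal{L}$ are immersed submanifolds partitioning $M$, with tangent spaces exactly the $H_xM$; this is the generalized foliation $H$.

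For the second stage, every leaf $L$ of $H$ is tautologically a Hamiltonian submanifold, so by the observation (\ref{dH0}) following Proposition \ref{necsufH1} the form $\iota_L^*\omega$ is closed and $(L,\iota_L^*\omega)$ is presymplectic. I would then define $\mathcal{D}_\omega$ fiberwise by
\[
\mathcal{D}_\omega\big|_x=\big\{(u,\alpha)\in T_xM\oplus T_x^*M:\ u\in H_xM,\ \alpha\big|_{H_xM}=i(u)\omega_x\big|_{H_xM}\big\}=\big\{\big(u,\,i(u)\omega_x+\gamma\big):\ u\in H_xM,\ \gamma\in\operatorname{Ann}(H_xM)\big\},
\]
and check the three Dirac axioms. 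Maximal isotropy is immediate: $\dim\mathcal{D}_\omega|_x=\dim H_xM+\operatorname{codim}H_xM=2n$, and the Courant pairing of two elements of $\mathcal{D}_\omega|_x$ equals $\tfrac12\big(\omega_x(u,v)+\omega_x(v,u)\big)=0$. For integrability under the Courant bracket, using $d\,i(X)\omega=0$, $L_X\omega=0$ and $i([X,Y])=L_Xi(Y)-i(Y)L_X$ one computes $\big[(X,i(X)\omega),(Y,i(Y)\omega)\big]=\big([X,Y],i([X,Y])\omega\big)$ with $[X,Y]\in\mathcal{L}$; for $\alpha\in\Gamma(\operatorname{Ann}H)$ and $Y\in\mathcal{L}$ one gets $\big[(0,\alpha),(Y,i(Y)\omega)\big]=(0,-i(Y)d\alpha)$, where $d\alpha(Y,Z)=0$ for all $Y,Z\in\mathcal{L}$ because $\alpha$ kills $H$ and $[Y,Z]\in\mathcal{L}$, while $\big[(0,\alpha),(0,\beta)\big]=0$; since these sections span $\mathcal{D}_\omega$ locally and the Courant bracket is a derivation in the second argument (the symmetric correction term vanishing by isotropy), $\Gamma(\mathcal{D}_\omega)$ is closed under it. Smoothness I would obtain near a point $x$ by choosing $X_1,\dots,X_p\in\mathcal{L}$ whose values form a basis of $H_xM$ together with a local coframe $\alpha_{p+1},\dots,\alpha_{2n}$ of $\operatorname{Ann}H$, so that $(X_1,i(X_1)\omega),\dots,(X_p,i(X_p)\omega),(0,\alpha_{p+1}),\dots,(0,\alpha_{2n})$ are pointwise independent smooth sections spanning $\mathcal{D}_\omega$.

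The step I expect to be the main obstacle is the smoothness — and, with it, the Courant-integrability — of $\mathcal{D}_\omega$ at the singular points of $H$, where $\dim H_xM$ jumps: there the annihilator part of $\mathcal{D}_\omega$ must exactly compensate the jump in the tangent part, and one has to see that this is forced by the nondegeneracy of $\omega$ — which makes $u\mapsto i(u)\omega$ a vector bundle isomorphism $TM\to T^*M$ — together with the $\mathcal{L}$-invariance of $H$ established in the first stage; integrability then propagates to those points from the regular (open dense) part by continuity.
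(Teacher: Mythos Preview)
Your argument for the generalized foliation is essentially the paper's: the locally Hamiltonian fields form a Lie algebra $\mathcal{L}$ whose flows preserve $\omega$ and hence send $\mathcal{L}$ to itself, so Stefan--Sussmann applies. The paper phrases the same flow-invariance as $\dim H_{\exp(tX)(x)}=\mathrm{const.}$ and then cites the equivalent integrability criterion (Theorem~2.9$''$ of \cite{V1}) rather than Stefan--Sussmann by name.

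The difference is in the Dirac half. The paper does not verify the axioms at all: once $H$ is integrable and (\ref{dH0}) makes the leafwise form closed, it simply invokes Courant \cite{C} to conclude that (\ref{Diracomega}) is a Dirac structure. Your direct route---checking maximal isotropy and Dorfman-bracket closure on the generators $(X,i(X)\omega)$ and $(0,\alpha)$, then using Leibniz in the second slot together with isotropy to pass to $C^\infty$-combinations---is correct and more transparent than the bare citation. What you rightly isolate as the remaining obstacle, smoothness of $\mathcal{D}_\omega$ as a rank-$2n$ subbundle at points where $\dim H_xM$ drops, is \emph{not} settled by your closing remark: a maximally isotropic family of subspaces is not forced to be a smooth subbundle by being so on an open dense set, and the nondegeneracy of $\omega$ only gives you the bundle isomorphism $\flat_\omega$, which reduces the question to smoothness of $H\oplus\mathrm{Ann}\,H$---still delicate across a rank jump, since smooth sections of $\mathrm{Ann}\,H$ coming from the regular part need not span $\mathrm{Ann}\,H_x$ at a singular $x$. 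This is precisely the content the paper outsources to \cite{C}, so neither proof handles it from scratch; if you want a self-contained argument you must actually exhibit, near every point, $2n$ smooth sections of $TM\oplus T^*M$ lying in $\mathcal{D}_\omega$ and linearly independent there.
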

\begin{proof} The distribution $H$ is locally spanned by the Lie algebra of locally Hamiltonian vector fields, hence, it is differentiable. Any $\omega$-preserving diffeomorphism of $M$ sends a locally Hamiltonian field to a locally Hamiltonian field. In particular this is true for the flow of a locally Hamiltonian field, whence we get $dim\,H_{exp(tX)(x)}=const.$ Under these conditions, $H$ is known to be integrable (e.g., Theorem 2.9", \cite{V1}). Obviously, the leaves of $H$ are Hamiltonian submanifolds and any connected Hamiltonian submanifold is contained in a leaf of $H$.

By results of \cite{C}, the integrability of $H$ and (\ref{dH0}) ensure that the field of subspaces of $TM\oplus T^*M$ defined by
\begin{equation}\label{Diracomega} \mathcal{D}_\omega(x)=
\{(X,\flat_{\varpi_x} X+\nu)\,/\,X\in H_x,\,\nu\in ann\,H_x\},
\end{equation} where $x$ runs in $M$, $\varpi_x$ is the restriction of $\omega_x$ to the leaf of $H$ through $x$ and $\flat_\varpi X$ is a leaf-wise form, is a Dirac structure \cite{C}.
\end{proof}

On an almost symplectic manifold $(M^{2n},\omega)$ the
following operators play an important role
$$\Lambda=i(\omega^{-1}),\;\delta=\star d\star,\;
\star\nu=[i(\sharp_\omega\nu)\omega^n]/n!\;\;(\nu\in\Omega^k(M))$$
($i$ denotes contraction by a multivector, $\omega^{-1}$ is the inverse bivector of $\omega$ and $\sharp_\omega=\flat_\omega^{-1}$,where $\flat_\omega$ is defined on vector fields by $\flat_\omega X=i(X)\omega)$). The forms $\nu$ such that $\Lambda\nu=0$ (in particular, all $0$-forms and $1$-forms) are called {\it primitive} or {\it effective} and any form can be presented by means of $\omega$ and of primitive forms (the Lepage decomposition
\cite{LM}).

The following proposition gives a characterization of
Hamiltonian fields on almost symplectic manifolds.
\begin{prop}\label{prpLib} For any almost symplectic manifold
$(M^{2n},\omega)$ one has \begin{equation}\label{Lib}
d\omega=\sigma\wedge\omega+\psi,
\;\;\sigma=\frac{1}{n-1}\delta\omega,\end{equation} where
$\psi$ is primitive and vanishes for $n=2$. A vector field $X$
on $M$ is locally Hamiltonian iff
\begin{equation}\label{eqhamX} \sigma(X)=0,\; \sigma\wedge
(i(X)\omega)-i(X)\psi=0,\;di(X)\omega=0.\end{equation} A function $f\in
C^\infty(M)$ is a Hamiltonian function iff
\begin{equation}\label{eqhamf} (\sharp_\omega\sigma)f=0,\;
\sigma\wedge df+i(\sharp_\omega(df))\psi=0.\end{equation}
\end{prop}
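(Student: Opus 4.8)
The plan is to read the proposition off the pointwise Hodge--Lefschetz (Lepage) algebra of each symplectic vector space $(T_xM,\omega_x)$, together with the single global identity $\Lambda d\omega=\delta\omega$. Write $L$ for the operator $\eta\mapsto\omega\wedge\eta$ and recall the standard algebraic facts, all valid pointwise with no hypothesis on $d\omega$: $[\Lambda,L]=(n-k)\,\mathrm{Id}$ on $\Omega^k$; every form of degree $\le1$ is primitive (so $\Lambda\sigma=0$ for a $1$-form $\sigma$), while $\Lambda\omega=n$; $\Lambda$ commutes with each interior product $i(Y)$ by a vector field, since $i_{P\wedge Q}=i_Q\circ i_P$ and $Y\wedge\omega^{-1}=\omega^{-1}\wedge Y$; and a primitive $k$-form vanishes whenever $k>n$.

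\emph{Decomposition (\ref{Lib}).} By the Lefschetz decomposition, $\Omega^3(M)$ splits pointwise as $P^3\oplus(\omega\wedge P^1)$, where $P^m$ denotes primitive $m$-forms and $P^1=\Omega^1$; hence $d\omega=\psi+L\sigma$ with $\psi$ a primitive $3$-form and $\sigma$ a $1$-form, and for $n=2$ one has $P^3=0$, so $\psi\equiv0$ there. Applying $\Lambda$ and using $\Lambda\psi=0$ together with $\Lambda(L\sigma)=[\Lambda,L]\sigma+L\Lambda\sigma=(n-1)\sigma$ gives $\Lambda d\omega=(n-1)\sigma$. It remains to identify $\Lambda d\omega$ with $\delta\omega$: from the given formula one computes $\star\omega=\pm\,\omega^{n-1}/(n-1)!$, hence $d\star\omega=\pm\,\omega^{n-2}\wedge d\omega/(n-2)!$; substituting $d\omega=\psi+L\sigma$ and using $\omega^{n-2}\wedge\psi=L^{n-2}\psi=0$ for $n\ge3$ (a primitive $3$-form is killed by $L^{n-2}$) leaves $\omega^{n-2}\wedge d\omega=L^{n-1}\sigma$, and the symplectic star sends $L^{n-1}\sigma/(n-1)!$ back to $\pm\sigma$; keeping track of the normalization yields $\delta\omega=(n-1)\sigma$, so $\sigma=\tfrac1{n-1}\delta\omega$ and $\psi=d\omega-\sigma\wedge\omega$ is (visibly) smooth. \emph{This identification is the one delicate point of the proposition}: it is a routine but sign-sensitive computation with the symplectic operator $\star$; everything below uses only $\Lambda d\omega=(n-1)\sigma$.

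\emph{Characterization of locally Hamiltonian fields (\ref{eqhamX}).} By (\ref{camplocH1}), $X$ is locally Hamiltonian iff $d\,i(X)\omega=0$ (the third listed equation) and $i(X)d\omega=0$. Contracting (\ref{Lib}) with $X$, and using that $i(X)$ is an antiderivation, $i(X)d\omega=\sigma(X)\,\omega-\sigma\wedge i(X)\omega+i(X)\psi$. If $i(X)d\omega=0$, apply $\Lambda$, commute it past $i(X)$, and use $\Lambda d\omega=(n-1)\sigma$ to get $0=i(X)\Lambda d\omega=(n-1)\sigma(X)$, so $\sigma(X)=0$; the displayed identity then collapses to $i(X)d\omega=-(\sigma\wedge i(X)\omega-i(X)\psi)$, forcing $\sigma\wedge i(X)\omega-i(X)\psi=0$. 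Conversely, these two equations feed back into the displayed identity to give $i(X)d\omega=0$. Together with $d\,i(X)\omega=0$, this is exactly (\ref{eqhamX}).

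\emph{Characterization of Hamiltonian functions (\ref{eqhamf}).} The function $f$ is a Hamiltonian function iff $X_f=\sharp_\omega(-df)$ satisfies $L_{X_f}\omega=0$; since $d\,i(X_f)\omega=-d\,df=0$ automatically, Cartan's formula gives $L_{X_f}\omega=i(X_f)d\omega$, so this is equivalent to $i(X_f)d\omega=0$, hence by (\ref{eqhamX}) to $\sigma(X_f)=0$ and $\sigma\wedge i(X_f)\omega-i(X_f)\psi=0$. Substituting $i(X_f)\omega=-df$ and $X_f=-\sharp_\omega(df)$, and using the skew-symmetry $\sigma(\sharp_\omega df)=-(df)(\sharp_\omega\sigma)$ (immediate from $\sigma(\sharp_\omega\alpha)=\omega(\sharp_\omega\sigma,\sharp_\omega\alpha)$), the first equation becomes $(\sharp_\omega\sigma)f=0$ and the second becomes $\sigma\wedge df+i(\sharp_\omega(df))\psi=0$ --- the precise signs following from $X_f=-\sharp_\omega df$ and the convention $i(\sharp_\omega\alpha)\omega=\alpha$. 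This is (\ref{eqhamf}), completing the proof.
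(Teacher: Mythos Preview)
Your proof is correct and follows essentially the same route as the paper. The paper simply cites the decomposition (\ref{Lib}) from \cite{Lib},\cite{LM}, whereas you sketch its derivation; and where the paper applies $\Lambda$ term-by-term to $\sigma(X)\omega-\sigma\wedge(i(X)\omega)+i(X)\psi=0$, you obtain $\sigma(X)=0$ more directly via $0=\Lambda\,i(X)d\omega=i(X)\Lambda d\omega=(n-1)\sigma(X)$ using the commutation $[\Lambda,i(X)]=0$ --- but this is the same computation in a slightly cleaner packaging.
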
 \begin{proof} Formula (\ref{Lib}) is known \cite{{Lib},{LM}}).
The first condition (\ref{camplocH1}) is included in (\ref{eqhamX}) and via (\ref{Lib}) the second condition (\ref{camplocH1}) becomes
$$\sigma(X)\omega-\sigma\wedge(i(X)\omega)+i(X)\psi=0.$$
By applying $\Lambda$ to the above and since $\psi$ is a primitive form, we get $(n-1)\sigma(X)=0$ and (\ref{eqhamX}) is proven. Then, taking $X=-\sharp_\omega(df)$ we get the conclusion for Hamiltonian functions. \end{proof}
\begin{corol}\label{cazpsi0} If $n\geq3$, $\psi=0$ and $\sigma\neq0$ on a
dense subset of $M$, then, $\sigma$ is closed and a function
$f$ defined on a neighborhood where $\sigma=dt$ is Hamiltonian
on that neighborhood iff $f=f(t)$.\end{corol}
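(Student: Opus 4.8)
The plan is to exploit Proposition~\ref{prpLib} with the extra hypothesis $\psi=0$, which collapses the structure equation to $d\omega=\sigma\wedge\omega$, and then to push the exterior derivative through this relation. First I would differentiate $d\omega=\sigma\wedge\omega$: since $d(d\omega)=0$ we get $0=d\sigma\wedge\omega-\sigma\wedge d\omega=d\sigma\wedge\omega-\sigma\wedge\sigma\wedge\omega=d\sigma\wedge\omega$, because $\sigma\wedge\sigma=0$ for a $1$-form. Thus $d\sigma\wedge\omega=0$. Now $d\sigma$ is a $2$-form, and wedging with $\omega$ on a $2n$-dimensional manifold with $n\ge 3$ is injective on $2$-forms precisely when the complementary-degree pairing is nondegenerate; more concretely, $L_\omega\colon\Omega^2\to\Omega^4$, $\beta\mapsto\beta\wedge\omega$, is injective for $2\le n$ by the hard-Lefschetz-type algebra for the nondegenerate form $\omega$ (equivalently, apply $\Lambda$ repeatedly, or note that $\beta\wedge\omega\wedge\omega^{n-2}=0$ together with the primitive decomposition forces $\beta=0$ when $n\ge 2$; one must be slightly careful that we are in degree $2$, where this needs $n\ge 2$). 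Hence $d\sigma=0$ on the dense subset where the argument is run, and by continuity $d\sigma=0$ on all of $M$.

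Next, since $\sigma$ is closed, locally $\sigma=dt$ for a function $t$ on a suitable neighborhood $U$; this is just the Poincar\'e lemma, and it is legitimate wherever $\sigma\neq 0$ so that $t$ is a genuine (nonconstant) coordinate-type function. On such a $U$ I would apply the Hamiltonian-function criterion \eqref{eqhamf}: with $\psi=0$ it reduces to the single condition $(\sharp_\omega\sigma)f=0$, i.e.\ $X_t\cdot f=0$ where $X_t=\sharp_\omega\sigma=\sharp_\omega(dt)$ is the Hamiltonian vector field of $t$ itself (note $t$ is automatically Hamiltonian since $(\sharp_\omega\sigma)t=\sigma(\sharp_\omega\sigma)=\omega^{-1}(\sigma,\sigma)=0$ by skew-symmetry, and the second equation in \eqref{eqhamf} is vacuous when $\psi=0$). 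So $f$ is Hamiltonian on $U$ iff $f$ is constant along the flow of $X_t$.

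Finally I would identify that condition with ``$f=f(t)$.'' The point is that $X_t$ is a nowhere-zero vector field on $U$ (where $\sigma\neq 0$, $\sharp_\omega$ being an isomorphism) satisfying $dt(X_t)=\sigma(X_t)=0$, so its flow lines lie in the level sets of $t$; conversely, to get the clean statement one wants the level sets of $t$ to be exactly the orbits of the (foliation generated by the) functions killed by $X_t$. I would argue that near a point of $U$ one can choose coordinates $(t,y^2,\dots,y^{2n})$ in which $X_t=\partial/\partial y^2$, so that $X_t f=0$ means $\partial f/\partial y^2=0$, i.e.\ $f$ depends only on $(t,y^3,\dots,y^{2n})$; to conclude literally $f=f(t)$ one needs more, and I expect this to be the main subtlety. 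The resolution should be that Corollary~\ref{cazpsi0} is really asserting the characterization of \emph{Hamiltonian} functions as those functions that Poisson-commute with $t$ in the Poisson algebra $\mathcal H(M,\omega)$ --- but since $\mathcal H$ may be as small as the functions of $t$ alone, the intended reading is that on the neighborhood in question the only Hamiltonian functions available are the $f(t)$; I would make this precise by showing that any Hamiltonian function $f$ satisfies both $X_f\cdot t=-X_t\cdot f=0$ and $X_f$ locally Hamiltonian, feed this back through \eqref{eqhamX} (which with $\psi=0$ forces $\sigma(X_f)=0$ and $\sigma\wedge i(X_f)\omega=0$, hence $i(X_f)\omega$ is proportional to $\sigma=dt$ at each point), and deduce $df\parallel dt$ pointwise, which on a connected neighborhood gives $f=f(t)$. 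The one genuine obstacle is thus the algebraic step $\sigma\wedge i(X_f)\omega=0\Rightarrow i(X_f)\omega\in\mathbb R\,\sigma$, which holds because $\sigma\wedge\alpha=0$ for a nonzero $1$-form $\sigma$ forces $\alpha$ to be a multiple of $\sigma$; combined with $i(X_f)\omega=-df$ this gives exactly $df=\lambda\,dt$ and hence $f=f(t)$.
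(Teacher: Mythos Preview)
Your argument for $d\sigma=0$ is essentially the standard one and is fine, except that the Lefschetz-type injectivity of $\beta\mapsto\beta\wedge\omega$ on $2$-forms holds for $n\ge 3$, not $n\ge 2$ (for $n=2$ the kernel is exactly the primitive $2$-forms); this is why the corollary assumes $n\ge 3$ and why Remark~\ref{obsn2} has to add $d\sigma=0$ by hand when $n=2$.

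The substantive issue is your treatment of \eqref{eqhamf}. You assert that with $\psi=0$ the second equation of \eqref{eqhamf} is ``vacuous''. It is not: it becomes $\sigma\wedge df=0$, and that is the decisive condition. Having discarded it, you are left with only $(\sharp_\omega\sigma)f=0$, which you rightly sense is far too weak to force $f=f(t)$ (it only says $f$ is constant along one vector field tangent to the level sets of $t$). Your recovery in the last paragraph via \eqref{eqhamX} then re-derives precisely $\sigma\wedge i(X_f)\omega=-\sigma\wedge df=0$, i.e.\ the very condition you threw away one paragraph earlier. So the proof eventually closes, but only after a detour that undoes a self-inflicted error.

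The paper's proof is simply the direct version: from \eqref{eqhamf} with $\psi=0$ the second condition reads $\sigma\wedge df=0$; this already implies the first condition, because wherever $\sigma\neq 0$ it forces $df$ to be a pointwise multiple of $\sigma$, and then $(\sharp_\omega\sigma)f=df(\sharp_\omega\sigma)$ is a multiple of $\sigma(\sharp_\omega\sigma)=\omega(\sharp_\omega\sigma,\sharp_\omega\sigma)=0$. Hence $f$ is Hamiltonian iff $\sigma\wedge df=0$, iff $df=s\,dt$ for some function $s$, iff $f=f(t)$ on the connected neighborhood.
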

\begin{proof} The fact that $n\geq3$ and $\psi=0$ imply $d\sigma=0$
is known and shows that $M$ is a locally conformal symplectic
manifold (e.g., \cite{LM}). The second condition (\ref{eqhamf}) reduces to $\sigma\wedge df=0$ and it implies the first
condition (\ref{eqhamf}) because it implies that $\sigma$ is proportional to $df$ wherever $df\neq0$. Thus, $f$ is Hamiltonian iff $\sigma\wedge df=0$ and our hypotheses yield $df=s\sigma=sdt$ for a function $s\in C^\infty(M)$. This implies the required conclusion.\end{proof}
\begin{rem}\label{obsn2} {\rm The conclusion of Corollary
\ref{cazpsi0} also holds for $n=2$ if $d\sigma=0$ is added to
the hypotheses. Corollary \ref{cazpsi0} shows that the
locally conformal symplectic manifolds have few Hamiltonian
functions and all the Poisson brackets are zero.}\end{rem}

We end this section by refereing to Marsden-Weinstein reduction theory in the almost symplectic case, while assuming that the reader is familiar with the corresponding theory on symplectic manifolds (e.g., \cite{{LM},{OR}}).

We follow our paper \cite{V0}, where the interest was in the para-Hermitian case. The wording of the definition of {\it Hamiltonian actions} and {\it equivariant momentum maps} is the same as in symplectic geometry but, the notion of a Hamiltonian vector field is that of the almost symplectic case. Obviously, the orbits of a Hamiltonian action are Hamiltonian submanifolds.

Assume that we have a Hamiltonian action of the Lie group $G$ on the almost symplectic manifold $(M,\omega)$ that has an equivariant momentum map $\Phi:M\rightarrow\mathfrak{g}^*$, where $\mathfrak{g}^*$ is the dual of the Lie algebra of $G$. Let $\theta\in\mathfrak{g}^*$ be a non critical value of $\Phi$ and $N= \Phi^{-1}(\theta)$ be the corresponding $G_\theta$-invariant, level submanifold of $M$, where $G_\theta$ is the isotropy subgroup of $\theta$ under the coadjoint action of $G$. For $x\in N$, we will denote by $G(x),G_\theta(x)$ the $G$, respectively $G_\theta$, orbit of $x$. Then, $N\cap G(x)=G_\theta(x)$ and (like in the symplectic case \cite{LM}) $T_xN\perp_\omega T_x(G(x))$, therefore, $K=ker(\iota^*\omega_x)=T_xN\cap T_x(G(x))=T_x(G_\theta(x))$ ($\iota:N\hookrightarrow M$).
\begin{theorem}\label{redMW} With the notation above enabled, if the action of $G_\theta$ on $N$ is free and proper, there exists a reduced quotient manifold $Q$ with the projection $q:N\rightarrow Q$ and with a reduced almost symplectic structure $\varpi$ such that $q^*\varpi=\iota^*\omega$.\end{theorem}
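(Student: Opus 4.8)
The plan is to mimic the classical Marsden--Weinstein construction, using only those facts about $\omega$ that survive the loss of closedness. First I would establish the manifold structure of the quotient: since $G_\theta$ acts freely and properly on $N$, the quotient $Q=N/G_\theta$ is a smooth manifold and $q:N\to Q$ is a surjective submersion with $\ker dq_x=T_x(G_\theta(x))$ for every $x\in N$. This part is pure differential topology and does not use $\omega$ at all, so it goes through verbatim as in the symplectic case (see \cite{{LM},{OR}}).

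Next I would construct $\varpi$ pointwise. Set $\eta=\iota^*\omega$, a (not necessarily closed) $2$-form on $N$. The identification $K=\ker(\iota^*\omega_x)=T_x(G_\theta(x))$ recorded just before the statement says precisely that $\eta_x$ has radical equal to $\ker dq_x$; hence $\eta_x$ descends to a well-defined nondegenerate bilinear form $\varpi_{q(x)}$ on $T_{q(x)}Q\cong T_xN/K$, i.e. $\varpi$ is characterised by $q^*\varpi=\eta=\iota^*\omega$. To see that $\varpi$ is globally well defined one uses $G_\theta$-equivariance: because the $G$-action preserves $\omega$ and $N$ is $G_\theta$-invariant, $\eta$ is $G_\theta$-invariant, so the two formulas $q^*\varpi=\iota^*\omega$ at points $x$ and $g\cdot x$ agree. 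Smoothness of $\varpi$ follows from the fact that $q$ admits local sections $s:U\to N$, through which $\varpi|_U=s^*\iota^*\omega$ is manifestly smooth, and independence of the section is exactly the descent property just verified. Nondegeneracy is the statement that the radical of $\eta_x$ is exactly $K$, which is the identity $K=\ker(\iota^*\omega_x)$ assumed in the setup. Thus $\varpi$ is an almost symplectic form, and $q^*\varpi=\iota^*\omega$ by construction.

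The one place where the argument genuinely differs from the symplectic case --- and the step I expect to be the main obstacle --- is the verification that $\varpi$ is a well-defined \emph{form on $Q$} rather than merely a field of bilinear forms on the vertical-quotient bundle; concretely, one must check that $q^*\varpi$ annihilates vertical vectors not just in its first slot but also that the descended form does not depend on the chosen horizontal lifts. In the symplectic case this is where $d\omega=0$ and the momentum-map equivariance combine; here $d\omega\neq0$, so I would instead argue directly: for $X\in T_xN$ vertical (i.e. $X\in K$) and any $Y\in T_xN$, we have $\eta_x(X,Y)=(\iota^*\omega)_x(X,Y)=\omega_x(\iota_*X,\iota_*Y)$; since $K=T_x(G_\theta(x))$ and $N=\Phi^{-1}(\theta)$ is $G_\theta$-invariant, $\iota_*X$ is tangent to the orbit and $\iota_*Y\in T_xN=(T_x(G(x)))^{\perp_\omega}$, whence $\omega_x(\iota_*X,\iota_*Y)=0$. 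This shows $K\subseteq\mathrm{rad}\,\eta_x$, and the reverse inclusion $\mathrm{rad}\,\eta_x\subseteq K$ is exactly the displayed equality $\ker(\iota^*\omega_x)=T_x(G_\theta(x))$ in the hypotheses; together they give that $\eta_x$ passes to a nondegenerate form on the quotient space $T_xN/K$, and equivariance makes the resulting assignment $x\mapsto\varpi_{q(x)}$ independent of $x$ in a fibre. No closedness of $\omega$ is needed anywhere, which is precisely why the reduction survives into the almost symplectic category; the price is simply that $\varpi$ carries no integrability, i.e. one cannot (and should not) assert $d\varpi=0$.

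Finally I would note uniqueness: any two-form $\varpi$ on $Q$ with $q^*\varpi=\iota^*\omega$ is unique because $q$ is a submersion, so the statement determines $(Q,\varpi)$ completely once the manifold $Q$ is fixed by the free-and-proper hypothesis. This completes the proof.
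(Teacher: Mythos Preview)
Your proof is correct and follows essentially the same route as the paper's: both arguments obtain $Q$ from the free and proper $G_\theta$-action, then descend $\iota^*\omega$ using the two facts $K=\ker(\iota^*\omega_x)$ (giving $i(X)\iota^*\omega=0$ and nondegeneracy) and $\omega$-invariance under the action (giving well-definedness along fibres). The only cosmetic difference is that the paper phrases the invariance infinitesimally---the generators of $K$ are Hamiltonian, hence $L_X\omega=0$---whereas you use the equivalent global statement that the $G$-action preserves $\omega$; note, incidentally, that even in the symplectic case the descent of $\iota^*\omega$ to $\varpi$ does not use $d\omega=0$ (closedness only enters when proving $d\varpi=0$), so your third paragraph slightly overstates the novelty of that step.
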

\begin{proof} Since the action of $G$ is free and proper, the set of the $G_\theta$-orbits of the points of $N$ is the required quotient manifold $Q$ and we have the projection $q:N\rightarrow Q$. Furthermore, we see that the subbundle $K$ is tangent to the foliation $\mathcal{K}$ of $N$ by the connected components of the $G_\theta$-orbits. It follows that the local cross sections of $K$ are spanned by Hamiltonian vector fields and $L_X\omega=0$, $\forall X\in\Gamma K$. The same vector fields $X\in\Gamma K$ satisfy $i(X)\omega=0$ because $K=ker(\iota^*\omega_x)$. These two facts ensure the existence of the reduced $2$-form $\varpi$ on $Q$. Moreover, $K=ker(\iota^*\omega_x)$ implies the non degeneracy of $\varpi$.\end{proof}
\section{Examples of Hamiltonian vector fields}
In this section we give examples of Hamiltonian functions and vector fields on almost symplectic, non-symplectic manifolds.
\begin{example}\label{locprod} {\rm
Assume that the almost symplectic manifold $(M,\omega)$ has a locally product structure defined by the foliations $\mathcal{F}_1,\mathcal{F}_2$ with the local equations $x^i=const.$, $y^k=const.$, respectively, and that $$\omega=\omega_1+\omega_2,\;\;\omega_1=
\frac{1}{2}\varphi_{ij}(x)dx^i\wedge dx^j,\,\omega_2=\frac{1}{2}\psi_{kh}(y)dy^k\wedge dy^h,$$
where $\omega_1$ is an almost symplectic form in the coordinates $(x^i)$ and $\omega_2$ is a symplectic form in the coordinates $(y^k)$ (i.e., $d\omega_2=0$). Then, the function $f(x^i,y^k)$ is a Hamiltonian function on $M$ iff it is $\omega_1$-Hamiltonian as a function of $(x^i)$ and the Hamiltonian vector field of $f$ on $M$ is the sum of its $\omega_1$-Hamiltonian field with its $\omega_2$-Hamiltonian field.}
\end{example}
\begin{example}\label{funchammilt} {\rm \cite{V0} Take
$$M=\{(x^1,x^2,x^3,x^4)\,/\,
x^1>0,x^2>0\}\subseteq\mathds{R}^4$$ and $$\omega=x^1dx^2\wedge
dx^3+x^2dx^1\wedge dx^4.$$ This is a globally conformal
symplectic manifold where
$$\sigma=d(ln(x^1x^2)),\;\psi=0.$$ Hence, by Corollary \ref{cazpsi0}, the Hamiltonian functions
are the functions $f(t)$, $t=x^1x^2$. The Hamiltonian vector
field of such a function is $$X_f=\frac{\partial f}{\partial
t}\left(\frac{\partial}{\partial
x^3}+\frac{\partial}{\partial x^4}\right).$$

The Dirac structure (\ref{Diracomega}) of the present example is generated by cross sections of the form $$\mathcal{D}_\omega=\{(h\left(\frac{\partial}{\partial
x^3}+\frac{\partial}{\partial x^4}\right),\nu_1dx^1+\nu_2dx^2+ \varphi(dx^3-dx^4))\}$$ where $h,\nu_1,\nu_2,\varphi\in C^\infty(M)$.
In particular, we see that the notions of $\omega$-Hamiltonian function and $\mathcal{D}_\omega$-Hamiltonian function in the sense of \cite{C} are different. Indeed, for any function $l(x^1,x^2,x^3,x^4)$ such that $\partial l/\partial x^3= -\partial l/\partial x^4$ we have $$(h\left(\frac{\partial}{\partial
x^3}+\frac{\partial}{\partial x^4}\right),dl)\in\mathcal{D}_\omega$$ and $l$ is a $\mathcal{D}_\omega$-Hamiltonian function, while it may not be $\omega$-Hamiltonian.

The Abelian group $G=\mathds{R}$ has the following action on $M$:
$$ \tilde{x}^1=x^1,\,\tilde{x}^2=x^2,\,\tilde{x}^3=x^3+a,\,
\tilde{x}^4=x^4+a\;\;(a\in\mathds{R}).$$ The corresponding infinitesimal action of the natural basis of the Lie algebra $\mathds{R}$ of $G$ is the vector field $\partial/\partial
x^3+\partial/\partial x^4$. Therefore, the action is Hamiltonian and has the equivariant momentum map
$$\Phi(x^1,x^2,x^3,x^4)=x^1x^2.$$ Every value $t>0\in\mathds{R}$ is non-critical for $\Phi$ with the level manifold $N=\Phi^{-1}(t)$ defined in $M$ by the equation $x^1x^2=t(=const.)$ On $N$, we have the global coordinates $x^1>0,x^3,x^4$ and, if we denote $\iota:N\hookrightarrow M$, then
$$\iota^*\omega= -\frac{dx^1}{x^1}\wedge(dx^3-dx^4).$$ This also is the expression of the reduced form $\varpi$ of the quotient manifold $Q$ of $N$ by the orbits of the restriction of $G=\mathds{R}$ to $N$. More exactly, $(Q,\varpi)$ is symplectomorphic with the plane $(\mathds{R}^2,du\wedge dv)$ by $u=ln\,x^1,v=x^4-x^3$.}\end{example}
\begin{example}\label{Maltomega} {\rm On the manifold $M$ of Example \ref{funchammilt} take the almost symplectic form
$$\theta=dx^1\wedge dx^2+dx^1\wedge dx^3+x^1x^2dx^3\wedge dx^4.$$ Then,
$$d\theta=x^1dx^2\wedge dx^3\wedge dx^4+x^2dx^1\wedge dx^3\wedge dx^4$$ and the vector fields that satisfy the condition $i(X)d\theta=0$ are given by the formula
$$X=f\left(x^1\frac{\partial}{\partial x^1}- x^2\frac{\partial}{\partial x^2}\right).$$
But, the condition $d(i(X)\theta)=0$ holds only for $f=0$. Therefore, no non-zero locally Hamiltonian vector fields exists. In particular, at any point of $M$, there are vectors that satisfy the necessary condition of a Hamiltonian tangent vector but they are not such. Notice that for the $2$-form $\theta$, (\ref{Lib}) holds with
$$\sigma=\frac{dx^1}{x^1}+\frac{dx^2}{x^2}+\frac{dx^3}{x^2},\;\psi=0,$$
where $d\sigma\neq0$.}\end{example}
\begin{example}\label{idemdimn} {\rm Take
$\tilde{M}=M\times\mathds{R}^{2n-4}$ with the non degenerate
$2$-form
$$\tilde{\omega}=\omega
+e^{x^3x^4}(\sum_{h=1}^{n-2}dy^h\wedge dy^{n-2+h}),$$ where $M$ and $\omega$ are those defined in Example \ref{funchammilt}
and $y^s$ are the natural coordinates on $\mathds{R}^{2n-4}$. Then,
$$d\tilde{\omega}=d(x^3-x^4)\wedge dx^1\wedge dx^2 +e^{x^3x^4}d(x^3x^4)\wedge
(\sum_{h=1}^{n-2}dy^h\wedge dy^{n-2+h})$$
and the only vector field
$$X=\sum_{i=1}^4\xi^i\frac{\partial}{\partial x^i}
+\sum_{k=1}^{2n-4}\eta^k\frac{\partial}{\partial y^k}$$ that satisfies the condition $i(X)d\tilde{\omega}=0$ is $X=0$. Hence, $(\tilde{M},\tilde{\omega})$ has no nonzero, locally Hamiltonian vector fields and no non-constant Hamiltonian functions.

But, following Example \ref{locprod}, if we replace $\tilde{\omega}$ by
$$\bar{\omega}=\omega
+\sum_{h=1}^{n-2}dy^h\wedge dy^{n-2+h},$$
any function of the form $f(t,y^1,...,y^{2n-4})$ ($t=x^1x^2$) is Hamiltonian and its Hamiltonian field is
$$X_f=\frac{\partial f}{\partial
t}\left(\frac{\partial}{\partial
x^3}+\frac{\partial}{\partial x^4}\right)+X^y_{f}$$
where $X^y_{f}$ is the Hamiltonian field of $f$ with respect to the canonical symplectic form of $\mathds{R}^{2n-4}$.}\end{example}
\begin{example}\label{exgroup} {\rm It is known that the manifold $M=G_1\times G_2$, where $G_1=G_2=G$ is a Lie group endowed with a left-invariant pseudo-Riemannian metric $\gamma$, has a canonical para-Hermitian structure (e.g., \cite{EST}). The fundamental form of this structure is an almost symplectic structure on $M$ that may be defined as follows.

Put $$\pi_1(g_1,g_2)=g_1,\pi_2(g_1,g_2)=g_2,
\iota_1(g)=(g,e),\iota_2(g)=(e,g),$$ where $e$ is the unit of $G$ and, generally, attach the index $1,2$ to images by $\pi_1,\iota_1$, $\pi_2,\iota_2$ of objects of $G$. Then, let $(Y_i)$, $(\omega^i)$ be a basis of left invariant vector fields and the dual basis of left invariant forms of $G$. The announced almost symplectic structure is
$$\omega=\gamma_{ij}\omega_1^i\wedge\omega_2^j,
\hspace{5mm}\gamma_{ij}=\gamma(Y_i,Y_j)=const.$$
Using the Cartan equations $$d\omega^i=\frac{1}{2}(c^i_{jk}\omega^k\wedge\omega^j)$$ where $c^i_{jk}$ are the structure constants of the Lie algebra $\mathfrak{g}$ of $G$, it follows that $d\omega=0$ iff the group $G$ is Abelian, hence, if $G$ is not Abelian, $(M,\omega)$ is a non-symplectic, almost symplectic manifold.

Now, consider a vector field of the form
$$
Z=X_1+X_2=\xi^iY_{i1}+\xi^iY_{i2},$$
where $X=\xi^iY_i$, $\xi^i=const.$ is a left invariant vector field on $G$.

A straightforward calculation gives
$$i(Z)\omega=\gamma_{ij}(\xi^i\omega_2^j-\xi^j\omega_1^i),\;
di(Z)\omega=\frac{1}{2}\gamma_{ij}(\xi^ic^j_{hk} \omega_2^h\wedge\omega_2^k-\xi^jc^i_{hk} \omega_1^h\wedge\omega_1^k),$$
therefore, $di(Z)\omega=0$ iff $\gamma_{ij}\xi^ic^j_{hk}=0$. The meaning of this condition is that the element $X$ of the Lie algebra $\mathfrak{g}$ of $G$ that defines the vector filed $Z$ must be $\gamma$-orthogonal to the derived algebra $[\mathfrak{g},\mathfrak{g}]$.

The second condition required for $Z$ to be locally Hamiltonian is $i(Z)d\omega=0$ and may also be expressed using the Cartan equations. A straightforward examination of the result shows it to be equivalent with the following global property $$\gamma(ad\,X(U),V)+\gamma(U,ad\,X(V))=0,
\hspace{2mm}X,U,V\in\mathfrak{g},$$
which is further equivalent to $L_X\gamma=0$, where $X$ and $\gamma$ are seen as left invariant tensor fields on $G$ and $L$ is the Lie derivative.

The conclusion is that $X\in\mathfrak{g}$ defines a locally Hamiltonian vector field $Z=X_1+X_2$ on $M$ iff $X$ is $\gamma$-orthogonal to the derived algebra of $\mathfrak{g}$ and the right translations by the flow of $X$ preserve the left invariant metric $\gamma$.}\end{example}
\section{Structures on tangent bundles}
In this section we discuss some almost symplectic structures on a tangent bundle $M=TN\stackrel{\pi}{\rightarrow}N$, where $N$ is an $n$-dimensional manifold. For the geometry of tangent bundles we refer the reader to \cite{YI}; a brief survey can be found in \cite{Vtg}.

On $M$ one has the tangent structure tensor field $S\in End(TM)$,
\begin{equation}\label{defS}
S\frac{\partial}{\partial x^i} =\frac{\partial}{\partial y^i},\,S\frac{\partial}{\partial y^i} =0,\end{equation}
where $(x^i)$, $i=1,...,n$, are local coordinates on $N$ and $(y^i)$ are vector coordinates with respect to the bases $(\partial/\partial x^i)$ (equations (\ref{defS}) are invariant under coordinate changes on $N$).

One has two intrinsic lift operations from $N$ to $M$. Firstly, there exists a unique homomorphism of real tensor algebras $V:\mathcal{T}_x\rightarrow\mathcal{T}_y$ ($x\in N,y\in\pi^{-1}(x)$) such that
\begin{equation}\label{liftv}
V(1)=1,\,V(\alpha)=\pi^*\alpha,\,V(X)=S\mathcal{X},\end{equation}
where $\alpha\in T^*_xN,
\mathcal{X}\in T_yM,\pi_*\mathcal{X}=X$.
This homomorphism is called the vertical lift and, instead of the notation of (\ref{liftv}), the images will be denoted by an upper index $v$. Namely, one has
$$ \Theta^v( \mathcal{X}_1,...,\mathcal{X}_q,
\lambda_1,...,\lambda_p)=\Theta(\pi_*\mathcal{X}_1,...,\pi_*\mathcal{X}_q,
\alpha_{\lambda_1},...,\alpha_{\lambda_p}),$$ where $\mathcal{X}_i\in T_yM,\lambda_i\in T^*_yM$ and
$\alpha_\lambda\in T^*_xN$ is characterized by $\alpha_\lambda(X)=\lambda(S\mathcal{X})$ with $X,\mathcal{X}$ like in (\ref{liftv}). If $\Theta$ is a form, then, $\Theta^v=\pi^*\Theta$. The name comes from the fact that $V(T_xN)=S(T_xM)=\mathfrak{V}_y$, where $\mathfrak{V}$ is the tangent bundle of the fibers of $M$, usually called the vertical bundle.

Secondly, there exists a homomorphism $C$ of real linear spaces, with images denoted by an upper index $c$, from the space of tensor fields of type $(p,q)$ on $N$ to the similar space on $M$, called the complete lift, such that
\begin{equation}\label{Ctensor}(P\otimes Q)^c=P^c\otimes Q^v+P^v\otimes Q^c\end{equation} and which is defined on functions, vector fields and $1$-forms in the following way. If $f\in C^\infty(N)$,
$$f^c(y)=y(f)=y^i\frac{\partial f}{\partial x^i},\hspace{3mm}(y\in M).$$ If $X\in\Gamma TN$, $X^c$ is the tangent vector field of the lifted flow $(exp\,tX)_*$; in local coordinates the complete lift is given by
$$ X=\xi^i(x^j)\frac{\partial}{\partial x^i},\;
X^c=\xi^i\frac{\partial}{\partial x^i}+y^j\frac{\partial \xi^i}{\partial x^j}\frac{\partial}{\partial y^i}.$$ If $\alpha$ is a $1$-form on $N$ then
$$\alpha^c(X^v)=(\alpha(X))^v,\,\alpha^c(X^c)=(\alpha(X))^c,$$
which, in local coordinates gives
$$ \alpha=\alpha_i(x^j)dx^i,\,\alpha^c=
y^j\frac{\partial\alpha_i}{\partial x^j}dx^i+\alpha_idy^i.$$

Now, let us assume that $N$ has an almost symplectic structure
$$\sigma=\frac{1}{2}\sigma_{ij}(x^k)dx^i\wedge dx^j.$$ Then, we have the following results.
\begin{prop}\label{sigmac} The complete lift $\sigma^c$ is an almost symplectic structure on $M$. If $X_f$ is a $\sigma$-Hamiltonian vector field on $N$ $(f\in C^\infty(N))$, then, the complete lift $X_f^c$ is $\sigma^c$-Hamiltonian for the function $f^c\in C^\infty(M)$. If the Lie group $G$ has a $\sigma$-Hamiltonian action on $N$ with an equivariant momentum map $\Phi$, then, the differential of this action is a $\sigma^c$-Hamiltonian action of $G$ on $M$, which has the equivariant momentum map $\Phi^c$.\end{prop}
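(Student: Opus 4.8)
The plan is to verify the three assertions in order, using the local-coordinate formulas for the complete lift given above together with the characterizations of Hamiltonian objects from Section~2.

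\emph{Step 1: $\sigma^c$ is almost symplectic.} Using the coordinate expression $\alpha^c = y^j(\partial\alpha_i/\partial x^j)dx^i + \alpha_i dy^i$ for the complete lift of a $1$-form together with the derivation rule \eqref{Ctensor}, one computes
$$\sigma^c = \tfrac12\Bigl(y^k\frac{\partial\sigma_{ij}}{\partial x^k}dx^i\wedge dx^j + \sigma_{ij}(dx^i\wedge dy^j + dy^i\wedge dx^j)\Bigr).$$
Its matrix in the coframe $(dx^i,dy^j)$ is block-triangular with both off-diagonal blocks equal to $\pm(\sigma_{ij})$, so nondegeneracy of $\sigma$ forces nondegeneracy of $\sigma^c$. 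Hence $\sigma^c$ is a nondegenerate $2$-form, i.e.\ an almost symplectic structure on $M$ (it is closed iff $\sigma$ is, which we neither need nor claim). I expect this step to be routine; the only care needed is in getting the coefficient of $dx^i\wedge dx^j$ right.

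\emph{Step 2: $X_f^c$ is $\sigma^c$-Hamiltonian with Hamiltonian $f^c$.} I would invoke the general principle (standard in tangent-bundle geometry, and consistent with \eqref{Ctensor}) that the complete lift commutes with $d$, with interior product, and with Lie derivative in the appropriate sense; concretely $(i(X)\sigma)^c = i(X^c)\sigma^c$ (as a $1$-form on $M$), $(df)^c = d(f^c)$, and $(L_X\sigma)^c = L_{X^c}\sigma^c$. Granting these, from $L_{X_f}\sigma = 0$ we get $L_{X_f^c}\sigma^c = 0$, and from $i(X_f)\sigma = -df$ we get $i(X_f^c)\sigma^c = -(df)^c = -d(f^c)$; by Definition~\ref{defHas} this is exactly the assertion. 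If one prefers to avoid citing the lift identities wholesale, they can be checked directly in coordinates: with $X_f = \xi^i\partial/\partial x^i$ one has $X_f^c = \xi^i\partial/\partial x^i + y^j(\partial\xi^i/\partial x^j)\partial/\partial y^i$, and a direct contraction of $X_f^c$ with the formula for $\sigma^c$ from Step~1, using $\sigma_{ij}\xi^j = \partial f/\partial x^i$, reproduces $-d(f^c)$ after collecting the $dx$ and $dy$ terms.

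\emph{Step 3: the lifted action and momentum map.} Let the $G$-action on $N$ have infinitesimal generators $\zeta_N$ for $\zeta\in\mathfrak g$, each of which is $\sigma$-Hamiltonian with Hamiltonian $\langle\Phi,\zeta\rangle$. The differential (tangent-lifted) action of $G$ on $M=TN$ has infinitesimal generators $(\zeta_N)^c$, since the tangent lift of a flow is generated by the complete lift of the generating vector field. By Step~2, $(\zeta_N)^c$ is $\sigma^c$-Hamiltonian with Hamiltonian $(\langle\Phi,\zeta\rangle)^c = \langle\Phi^c,\zeta\rangle$, where $\Phi^c:M\to\mathfrak g^*$ is defined by $\langle\Phi^c(y),\zeta\rangle := (\langle\Phi,\zeta\rangle)^c(y) = y(\langle\Phi,\zeta\rangle)$; this is smooth and linear in $\zeta$, hence a genuine momentum map. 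Equivariance of $\Phi^c$ under the coadjoint action follows from equivariance of $\Phi$ together with naturality of the complete lift under diffeomorphisms: for $g\in G$ acting by $\phi_g$ on $N$ and by $(\phi_g)_*$ on $M$, one has $\langle\Phi,\zeta\rangle\circ\phi_g = \langle\Phi, \mathrm{Ad}_{g^{-1}}\zeta\rangle$, and applying the complete lift (which intertwines $\phi_g$ on $N$ with $(\phi_g)_*$ on $M$) gives $\langle\Phi^c,\zeta\rangle\circ(\phi_g)_* = \langle\Phi^c,\mathrm{Ad}_{g^{-1}}\zeta\rangle$, i.e.\ $\Phi^c\circ(\phi_g)_* = \mathrm{Ad}^*_{g^{-1}}\circ\Phi^c$.

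The main obstacle, such as it is, is Step~2: everything hinges on the compatibility of the complete lift with $d$ and $i(\cdot)$, i.e.\ on the identities $(df)^c = d(f^c)$ and $(i(X)\sigma)^c = i(X^c)\sigma^c$. These are classical facts about the complete lift (see \cite{YI,Vtg}), but since the rest of the proposition is essentially formal once they are in hand, I would either cite them explicitly or, to keep the note self-contained, verify the single instance actually used — contracting $X_f^c$ into the coordinate form of $\sigma^c$ — by the short direct computation sketched above. Steps~1 and~3 are then bookkeeping.
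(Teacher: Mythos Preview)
Your proposal is correct and follows essentially the same route as the paper: compute $\sigma^c$ in coordinates to get nondegeneracy, then invoke the classical identities $(d\Theta)^c=d(\Theta^c)$, $(i(X)\Theta)^c=i(X^c)\Theta^c$, $(L_X\Theta)^c=L_{X^c}\Theta^c$ from \cite{YI} for $X=X_f$, $\Theta=\sigma$, and finally observe that the infinitesimal generators of the tangent-lifted action are the complete lifts $(\zeta_N)^c$. If anything, you supply more detail than the paper on the equivariance of $\Phi^c$, which the paper leaves implicit.
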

\begin{proof} Using (\ref{Ctensor}) and the local coordinate expressions of $f^c,X^c,\alpha^c$, we get
$$\sigma^c=\frac{1}{2}y^k\frac{\partial\sigma_{ij}}{\partial y^k}dx^i\wedge dx^j+\sigma_{ij}dx^i\wedge dy^j,$$ which proves the first assertion. Furthermore \cite{YI}, it is known that
$\forall X\in\Gamma TN,\Theta\in\Omega^s(N)$, one has
\begin{equation}\label{resYI}(d\Theta)^c=d(\Theta^c), \;(i(X)\Theta)^c=i(X^c)\Theta^c, \;L_{X^c}\Theta^c=(L_X\Theta)^c.\end{equation} The remaining assertions of the proposition are straightforward consequences of (\ref{resYI}) used for $X=X_f,\Theta=\sigma$. The infinitesimal transformations of $G$ on $M$ are the complete lifts of the infinitesimal transformations on $N$. The complete lift of a function extends to vector valued functions, which gives the meaning of the complete lift of the momentum map. Notice that, if used for open neighborhoods in $N$, the proven results show that the complete lift of a locally $\sigma$-Hamiltonian, vector field is a locally $\sigma^c$-Hamiltonian, vector field.
\end{proof}

In order to obtain other interesting almost symplectic structures on tangent bundles we shall assume that $N$ is endowed with a (pseudo-)Riemannian metric $\gamma$ and that a choice of a horizontal bundle $ \mathfrak{H}$ was made, i.e., we have a decomposition
$$TM=\mathfrak{H}\oplus\mathfrak{V}.$$
Then, we also have the horizontal lift $(X\in T_xN)\mapsto(X^h\in T_yM)$ defined by the conditions $X^h\in\mathfrak{H}_y,\pi_*X^h=X$. This lift also extends to tensors \cite{YI}.

On $M$, we define an almost symplectic form $\omega$ {\it associated} to $(\gamma,\mathfrak{H})$ by
$\omega|_{ \mathfrak{H}}=0$, $\omega|_{ \mathfrak{V}}=0$ and
$$\omega_y( \mathcal{X},\mathcal{Y} )=-\omega_y( \mathcal{Y},\mathcal{X} )=\gamma_{\pi(y)}(X,Y),$$ for
$\mathcal{X}=X^h,\,\mathcal{Y}=Y^v,\;y\in M,\, X,Y\in T_{\pi(y)}N$.

We shall also need the {\it associated metric} on $M$ by the equalities
$$g(X^h,Y^h)=g(SX^h,SY^h)=\gamma(X,Y)=\omega(X^h,SY^h),\,
g(X^h,Y^v)=0.$$

The tensors $\omega,g$ may be expressed by means of local coordinates as follows \cite{VCz}. $ \mathfrak{V}$ has the local bases $\partial/\partial y^i$ and $ \mathfrak{H}$ has the bases given by the horizontal lifts of $\partial/\partial x^i$:
\begin{equation}\label{bazah}X_i=\frac{\partial}{\partial x^i}-t^j_i(x^k,y^l)\frac{\partial}{\partial y^j},\end{equation}
where $-t^j_i$ are the {\it coefficients of the non-linear connection} $\mathfrak{H}$. The annihilators of $ \mathfrak{V}, \mathfrak{H}$ have the corresponding dual bases
\begin{equation}\label{cobazah}dx^i,\; \theta^i=dy^i+t^i_jdx^j.\end{equation} With respect to these bases we get
\begin{equation}\label{omegag} \omega=\gamma_{ij}dx^i\wedge\theta^j,\;
g=\gamma_{ij}dx^i\otimes dx^j+\gamma_{ij}\theta^i\otimes \theta^j,\end{equation} where $\gamma_{ij}$ are the local components of the metric tensor $\gamma$.
\begin{prop}\label{propHv} {\rm1}. A vertical vector field $X^v$ is locally Hamiltonian with respect to the structure $\omega$ associated to $(\gamma,\mathfrak{H})$ iff
$d(i(X^v)\omega)=0$. {\rm2}. For any function $f\in C^\infty(N)$, the function $\pi^*f\in C^\infty(M)$ is $\omega$-Hamiltonian with the vertical Hamiltonian vector field $X^v=-\sharp_\omega d(\pi^*f)$.\end{prop}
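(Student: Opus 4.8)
The plan is to compute everything in the adapted coframe $(dx^i,\theta^i)$ of (\ref{cobazah}), in which $\omega=\gamma_{ij}dx^i\wedge\theta^j$. First I would settle part 2, which will then essentially contain part 1. Write $\phi=\pi^*f$, so $d\phi=(\partial f/\partial x^i)\,dx^i$ since $f$ depends only on the base coordinates. Because $\omega$ pairs the horizontal coframe forms $dx^i$ only with the vertical coframe forms $\theta^j$, the vector $\sharp_\omega d\phi$ has no horizontal component: explicitly $X:=-\sharp_\omega d\phi$ is the vertical field $X=\gamma^{ij}(\partial f/\partial x^j)\,\partial/\partial y^i$ (using $\gamma^{ij}$ for the inverse metric), hence $X=Z^v$ where $Z=\sharp_\gamma df$ on $N$, i.e. $X$ is a vertical lift. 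By construction $i(X)\omega=-d\phi=-d(\pi^*f)$, which is exact, so the only thing left to check for the Definition \ref{defHas} conditions is $L_X\omega=0$, equivalently (by $L_X=d\,i(X)+i(X)\,d$) that $i(X)d\omega=0$, since $d(i(X)\omega)=-d^2\phi=0$ automatically.

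So the crux of both parts is the identity $i(X^v)d\omega=0$ for any vertical field $X^v$. Here I would use the structure of $d\omega$. From $\omega=\gamma_{ij}dx^i\wedge\theta^j$ one gets $d\omega=d\gamma_{ij}\wedge dx^i\wedge\theta^j+\gamma_{ij}dx^i\wedge d\theta^j$; every term is a wedge of three $1$-forms each of which is either some $dx^k$ or some $\theta^k$, and I claim no term is of the form $(\text{function})\,\theta^a\wedge\theta^b\wedge\theta^c$ purely vertical — indeed the visible factor $dx^i$ in both summands guarantees at least one horizontal leg in each monomial, and $d\theta^j=dt^j_k\wedge dx^k$ likewise carries a $dx^k$. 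Contracting with a vertical vector field $X^v=\eta^k\partial/\partial y^k$ kills $dx^i$ (since $\langle dx^i,X^v\rangle=0$) and sends $\theta^j\mapsto\eta^j$; so in $i(X^v)d\omega$ each surviving monomial still retains that horizontal leg $dx^i$ or $dx^k$ — except we must be careful, contraction can remove that leg if it is the one hit. Let me instead argue coordinate-freely: $\mathfrak V=\ker(\pi_*)$ and $\omega$ vanishes on $\Lambda^2\mathfrak V$, so for vertical $U$ the form $i(U)\omega$ annihilates $\mathfrak V$, i.e. $i(U)\omega$ is semibasic; and a short check gives that $i(U)i(W)d\omega=0$ for all vertical $U,W$ because $i(W)d\omega$ evaluated on two vertical arguments involves $d\omega$ on three vertical vectors, which vanishes as shown. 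The cleanest route: verify directly in coordinates that $i(X^v)d\omega$, a $2$-form, vanishes when evaluated on any two of the $2n$ basis vectors $\{\partial/\partial x^a,\partial/\partial y^b\}$, using that $d\omega$ has no $\theta\wedge\theta\wedge\theta$ part.

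Once $i(X^v)d\omega=0$ is in hand, part 1 is immediate: a vertical field $X^v$ satisfies the second condition (\ref{camplocH1}) automatically, so the pair (\ref{camplocH}) reduces to the single surviving condition $d(i(X^v)\omega)=0$; conversely that single condition together with the free $i(X^v)d\omega=0$ is exactly (\ref{camplocH1}), hence (\ref{camplocH}). For part 2, apply part 1 (or the direct computation) to $X^v=-\sharp_\omega d(\pi^*f)$: since $i(X^v)\omega=-d(\pi^*f)$ is closed, $d(i(X^v)\omega)=0$, so $X^v$ is locally Hamiltonian, and since $i(X^v)\omega=-d(\pi^*f)$ is moreover exact, $\pi^*f$ is $\omega$-Hamiltonian by Definition \ref{defHas}. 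I also need to confirm $\sharp_\omega d(\pi^*f)$ is genuinely vertical, which is the observation above that $\omega^{-1}$ sends semibasic forms into $\mathfrak V$.

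\textbf{Expected main obstacle.} The only real work is the identity $i(X^v)d\omega=0$; the potential trap is the bookkeeping of which leg a contraction removes, so I would phrase it via ``$d\omega$ has vanishing component in $\Lambda^3\mathfrak V^\circ$-complement,'' i.e. $d\omega(V_1,V_2,V_3)=0$ for vertical $V_i$, from which $i(X^v)d\omega(V,V')=d\omega(X^v,V,V')=0$ for vertical $V,V'$, and the mixed evaluations reduce to $d\omega$ with at most two vertical slots where the explicit formula above applies — all elementary but worth stating carefully rather than ``by a straightforward calculation.''
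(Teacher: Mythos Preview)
Your central claim --- that $i(X^v)d\omega=0$ for \emph{every} vertical field $X^v$ --- is false, and this is the gap. In the coframe $(dx^i,\theta^j)$ you correctly observe that $d\omega$ has no $\theta\wedge\theta\wedge\theta$ component, but it \emph{does} have a $dx\wedge dx\wedge\theta$ component: both $d\gamma_{ij}\wedge dx^i\wedge\theta^j$ and $\gamma_{ij}\,dx^i\wedge(\partial t^j_k/\partial y^l)\,\theta^l\wedge dx^k$ contribute. Contracting such a term with $X^v$ in the $\theta$-slot leaves a nonzero $dx\wedge dx$ piece. So $i(X^v)d\omega$ is in general a nonzero semibasic $2$-form, and your proposed ``cleanest route'' cannot succeed. (You even flag the danger --- ``contraction can remove that leg if it is the one hit'' --- and then proceed as if it did not occur.)

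What the paper actually shows is finer: $d\omega(X^v,Y^v,Z^v)=0$ and $d\omega(X^v,Y^v,Z^h)=0$ hold unconditionally, but on the remaining slot one has
\[
i(X^v)d\omega(Y^h,Z^h)=-\,d(i(X^v)\omega)(Y^h,Z^h),
\]
so the hypothesis $d(i(X^v)\omega)=0$ is exactly what kills the last component and yields $i(X^v)d\omega=0$. The ``iff'' in part 1 thus comes from this identity, not from an unconditional vanishing. Once part 1 is repaired in this way, your treatment of part 2 (verticality of $\sharp_\omega d(\pi^*f)$ via $\mathfrak V$ Lagrangian, then apply part 1) is correct and matches the paper.
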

\begin{proof}
1. Since $ \mathfrak{V}$ is $\omega$-Lagrangian we get
\begin{equation}\label{auxrm1}i(X^v)d\omega(Y^v,Z^v)=d\omega(X^v,Y^v,Z^v)=0.
\end{equation} We also have
\begin{equation}\label{auxrm2}
i(X^v)d\omega(Y^v,Z^h)=d\omega(X^v,Y^v,Z^h)=0.
\end{equation}
Indeed, it suffices to check this for $X^v=\partial/\partial y^i,Y^v=\partial/\partial y^j,X_k)$ and the result follows from (\ref{bazah}), (\ref{cobazah}) and (\ref{omegag}). The same local expressions yield
$$ i(X^v)d\omega(Y^h,Z^h)=d\omega(X^v,Y^h,Z^h)=-d(i(X^v)\omega)(Y^h,Z^h),
$$
hence, $d(i(X^v)\omega)=0$ implies $i(X^v)d\omega=0$ and we are done.

2. With the first conclusion proven, the only fact we still have to check for the second conclusion is the verticality of the vector field $\sharp_\omega d(\pi^*f)$. Since $\mathfrak{V}$ is $\omega$-Lagrangian, verticality is equivalent to $\omega(\sharp_\omega d\pi^*f,Y^v)=Y^v\pi^*f=0$, $\forall Y^v$, which is true.
\end{proof}
\begin{rem}\label{obsHv} {\rm With the notation of Proposition \ref{propHv}, it is easy to check that
$i(X^v)\omega=-(\flat_\gamma X^v)\circ S$ and that the indicated Hamiltonian field of $\pi^*f$ is also equal to $X^v=\sharp_g[(d\pi^*f)\circ S']$, where $g$ is the associated metric and $S'\in End\,TM$ is zero on $ \mathfrak{H}$ and is defined on $ \mathfrak{V}$ by the conditions \begin{equation}\label{S'}S'X^v\in\mathfrak{H},\, SS'X^v=X^v.\end{equation}}\end{rem}
\begin{corol}\label{verteinH} {\rm1.} With respect to the associated almost symplectic form $\omega$, every vertical tangent vector of $M$ is a Hamiltonian tangent vector equal to the point-value of a vertical, local Hamiltonian vector field. {\rm2.} The vertical, locally Hamiltonian vector fields on $M$ are in a one-to-one correspondence with the closed $1$-forms on $N$.\end{corol}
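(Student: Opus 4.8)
The plan is to use Proposition~\ref{propHv} to reduce everything to a statement about $1$-forms on $N$. For part~1, fix $y \in M$ and a vertical tangent vector $Z_0 \in \mathfrak{V}_y$. By the remark's identity $i(X^v)\omega = -(\flat_\gamma X^v)\circ S$, or equivalently by the explicit bases (\ref{bazah})--(\ref{omegag}), a vertical vector field $X^v$ has $i(X^v)\omega = -\pi^*\alpha$ for a suitable $1$-form $\alpha$ on $N$; concretely, writing $X^v = X^i(x,y)\,\partial/\partial y^i$ we compute $i(X^v)\omega = -\gamma_{ij}X^i\,dx^j$, so $\alpha = \gamma_{ij}X^i\,dx^j$ pulled back. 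To realize $Z_0$ I would take $X$ to be the vertical lift of a \emph{constant} (parallel-in-fibre) vector field: choose the components $X^i$ independent of $y$ and matching $Z_0$ at $y$. Then $\alpha = \gamma_{ij}(x)X^j(x)\,dx^i$ depends only on $x$, and by shrinking to a coordinate neighbourhood I may further arrange $\alpha$ to be closed — e.g. take $X^i$ so that $\alpha$ is locally exact, say $\alpha = dh$ for the appropriate $h$ on $N$; this is possible pointwise at $y$ with any prescribed value, and a neighbourhood suffices. By part~1 of Proposition~\ref{propHv}, $d(i(X^v)\omega) = -\pi^*d\alpha = 0$ makes $X^v$ locally Hamiltonian, and $X^v(y) = Z_0$.

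For part~2, I would set up the correspondence explicitly. Given a vertical, locally Hamiltonian vector field $X^v$ on (an open set of) $M$, the computation $i(X^v)\omega = -\pi^*\alpha$ above shows $X^v = -\sharp_\omega(\pi^*\alpha)$ for a unique $1$-form $\alpha$ on $N$ (uniqueness because $\gamma$ is nondegenerate, so $\alpha_i = \gamma_{ij}X^j$ recovers $\alpha$ from $X^v$, and $\pi^*$ is injective on forms). The locally Hamiltonian condition, by Proposition~\ref{propHv}(1), is exactly $d(i(X^v)\omega) = 0$, i.e. $\pi^*d\alpha = 0$, i.e. $d\alpha = 0$ since $\pi^*$ is injective on forms. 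Conversely, any closed $1$-form $\alpha$ on $N$ yields the vertical field $X^v := -\sharp_\omega(\pi^*\alpha)$ (verticality because $\mathfrak V$ is $\omega$-Lagrangian and $\pi^*\alpha$ annihilates $\mathfrak V$, exactly as in the proof of Proposition~\ref{propHv}(2)), and $d(i(X^v)\omega) = -\pi^*d\alpha = 0$, so $X^v$ is locally Hamiltonian. These two assignments are mutually inverse, giving the bijection.

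The only point requiring a little care is the claim that $X^v$ being \emph{vertical} forces $i(X^v)\omega$ to be the pullback of a form from $N$ — one must check that $i(X^v)\omega$ annihilates $\mathfrak V$ (immediate from $\omega|_{\mathfrak V} = 0$, i.e. $\mathfrak V$ being $\omega$-Lagrangian) and, more importantly, that the resulting form on the base is well defined, i.e. $i(X^v)\omega$ evaluated on a horizontal lift $Y^h$ depends only on $\pi_*Y^h = Y$ and the fibre-point. This is clear from (\ref{omegag}): $i(X^v)\omega(Y^h) = \gamma_{ij}X^i Y^j$ by the definition of $\omega$, and for the correspondence in part~2 one restricts to those $X^v$ whose components $X^i = \gamma^{ij}\alpha_j$ indeed come from a base form $\alpha$ — which is automatic once $d(i(X^v)\omega)=0$ has been imposed and interpreted via the verticality. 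I expect the identification $i(X^v)\omega = -\pi^*\alpha$ and the verification that $\alpha$ lives on $N$ to be the main (though routine) obstacle; everything else is formal manipulation with $\pi^*$ being injective on differential forms.
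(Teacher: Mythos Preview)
Your proof is correct and follows essentially the same route as the paper: for part~1 you extend the covector $i(Z_0)\omega_y$ to a closed form in $ann\,\mathfrak{V}$ (the paper says this in one line, you do it in coordinates via an exact $\alpha=dh$ on $N$), and for part~2 you use the bijection $X^v\leftrightarrow i(X^v)\omega\in\Gamma(ann\,\mathfrak{V})$ and observe that $d(i(X^v)\omega)=0$ forces the coefficients to be fibrewise constant, hence a pullback of a closed form from $N$. The only cosmetic difference is that the paper phrases part~2 as a single bijection $\Gamma\mathfrak{V}\to\Gamma(ann\,\mathfrak{V})$ restricted to the relevant subsets, whereas you write out both directions explicitly; your final paragraph correctly isolates the one nontrivial point (that closedness forces basicness), which the paper leaves implicit.
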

\begin{proof} 1. Take $y\in M$ and $Z^v_y\in\mathfrak{V}_y$. Locally, extend the tangent covector $i(Z^v_y)\omega_y\in ann\,\mathfrak{V}_y$ to a closed $1$-form $\alpha\in ann\,\mathfrak{V}$. The assertion of the corollary holds because the equation $i(Z^v)\omega=\alpha$ has a vertical solution $Z^v$ that equals $Z^v_y$ at $y$. 2. The mapping $Z^v
 \rightarrow\alpha=i(Z^v)\omega$ is a bijection $\Gamma\mathfrak{V}\rightarrow\Gamma(ann\,\mathfrak{V})$ and $Z^v$ satisfies the condition of part 1 of Proposition \ref{propHv} iff the corresponding form $\alpha$ is the pullback of a closed $1$-form of $N$.\end{proof}

In order to understand other $\omega$-Hamiltonian conditions we will use the associated metric $g$.
Since $ \mathfrak{H}\perp_g\mathfrak{V}$ we have the so-called second canonical connection \cite{VCz} defined by
$$\begin{array}{l}D_{X^h}Y^h=pr_{ \mathfrak{H}}\nabla_{X^h}Y^h,\,D_{X^h}Y^v= pr_{ \mathfrak{V}}[X^h,Y^v],\vspace{2mm}\\  D_{X^v}Y^h= pr_{ \mathfrak{H}}[X^v,Y^h],\,D_{X^v}Y^v=pr_{ \mathfrak{V}}\nabla_{X^v}Y^h,\end{array}$$
where $\nabla$ is the Levi-Civita connection of $g$.

Connection $D$ is characterized by the preservation of $\mathfrak{H}$ and $\mathfrak{V}$, by the preservation of $g|_{\mathcal{H}},g|_{\mathfrak{V}}$ along $\mathfrak{H}, \mathfrak{V}$ and by the following expression of the torsion:
$$ T_D(Z_1,Z_2)=-pr_{\mathfrak{V}}
[pr_{ \mathfrak{H}}Z_1,pr_{ \mathfrak{H}}Z_2]=-R_{ \mathfrak{H}}(Z_1,Z_2),\;Z_1,Z_2\in\Gamma TM,$$
where $R_{ \mathfrak{H}}$ denotes the Ehressmann curvature of the distribution $ \mathfrak{H}$ seen as an	 Ehressmann connection.

On the other hand, we recall the following general expression of the exterior differential of a $2$-form $\omega$ by means of a connection $D$:
$$ d\omega(X,Y,Z)=\sum_{Cycl(X,Y,Z)}[D_X\omega(Y,Z)+\omega(T_D(X,Y),Z)].
$$

Now, we will prove the following proposition.
\begin{prop}\label{propHh} Let $f\in C^\infty(M)$ be a first integral of the horizontal distribution $ \mathfrak{H}$. Then, $grad_g\,f=\sharp_g df$ is a vertical vector field, hence, $X^h=-S'grad_g\,f$ is horizontal. Furthermore, if: {\rm1)} $X^h$ belongs to the kernel of the Ehressmann curvature $R_{ \mathfrak{H}}$ and {\rm2)} $X^h$ is $g$-orthogonal to the image of $R_{ \mathfrak{H}}$, then $f$ is a Hamiltonian function with the horizontal Hamiltonian vector field $X^h$.\end{prop}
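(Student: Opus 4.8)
The plan is to check the two defining conditions of a Hamiltonian vector field, $i(X^h)\omega=-df$ and $L_{X^h}\omega=0$, in turn. The first already holds because $f$ is a first integral of $\mathfrak{H}$, so hypotheses (1) and (2) are not needed for it. Indeed, $f$ being a first integral of $\mathfrak{H}$ means $df\in ann\,\mathfrak{H}$; since $\mathfrak{V}=\mathfrak{H}^{\perp_g}$, the vector $grad_g\,f=\sharp_g df$ is vertical, so $X^h=-S'grad_g\,f$ is horizontal and $SX^h=-SS'grad_g\,f=-grad_g\,f$. Now $i(X^h)\omega$ vanishes on $\mathfrak{H}$ because $\omega|_{\mathfrak{H}}=0$, exactly as $df$ does because $f$ is a first integral; and on a vertical vector $Y^v=SY^h$ one has, using the relation $\omega(X^h,SY^h)=g(SX^h,SY^h)$ recorded among the defining properties of the associated metric, $i(X^h)\omega(Y^v)=\omega(X^h,Y^v)=g(SX^h,Y^v)=g(-grad_g\,f,Y^v)=-df(Y^v)$. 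Hence $i(X^h)\omega=-df$; in particular $d(i(X^h)\omega)=0$, and by Cartan's formula the remaining claim $L_{X^h}\omega=0$ is equivalent to $i(X^h)d\omega=0$.

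To prove $i(X^h)d\omega=0$ I would evaluate $d\omega(X^h,Y,Z)$ by means of the connection expression of the exterior differential stated above, with $Y,Z$ running over an adapted frame (horizontal or vertical lifts). Since $D$ preserves $\mathfrak{H}$ and $\mathfrak{V}$ and $\omega$ vanishes on pairs of horizontal and on pairs of vertical vectors, the terms $(D_\cdot\omega)(\cdot,\cdot)$ with both arguments of the same type are zero; the mixed ones $(D_\cdot\omega)(\text{hor},\text{ver})$ also vanish, by a short computation that uses the preservation of $g|_{\mathfrak{H}},g|_{\mathfrak{V}}$ along $\mathfrak{H},\mathfrak{V}$ together with the compatibility of $D$ with the tangent structure $S$. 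Thus $d\omega(X^h,Y,Z)$ reduces to the torsion term in the formula, and, since $T_D=-R_{\mathfrak{H}}$,
$$d\omega(X^h,Y,Z)=-\big[\omega(R_{\mathfrak{H}}(X^h,Y),Z)+\omega(R_{\mathfrak{H}}(Y,Z),X^h)+\omega(R_{\mathfrak{H}}(Z,X^h),Y)\big].$$
Because $R_{\mathfrak{H}}(a,b)=-pr_{\mathfrak{V}}[pr_{\mathfrak{H}}a,pr_{\mathfrak{H}}b]$ depends only on horizontal parts and takes values in $\mathfrak{V}$, and $\omega$ kills pairs of vertical vectors, the first and third brackets — which contain $R_{\mathfrak{H}}(X^h,\cdot)$ — vanish by hypothesis (1), $X^h\in\ker R_{\mathfrak{H}}$; the middle bracket equals $-\omega(R_{\mathfrak{H}}(pr_{\mathfrak{H}}Y,pr_{\mathfrak{H}}Z),X^h)=g\big(SX^h,R_{\mathfrak{H}}(pr_{\mathfrak{H}}Y,pr_{\mathfrak{H}}Z)\big)$ and vanishes by hypothesis (2), which says precisely that $X^h$ — equivalently, under the isomorphism $S$, the vertical vector $SX^h=-grad_g\,f$ — is $g$-orthogonal to the image of $R_{\mathfrak{H}}$. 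Hence $d\omega(X^h,Y,Z)=0$ for all $Y,Z$, so $i(X^h)d\omega=0$, $L_{X^h}\omega=0$, and $X^h=X_f$.

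The main obstacle is the vanishing of the mixed $(D_\cdot\omega)$-terms: one must extract from the characterizing properties of the second canonical connection $D$ — preservation of $\mathfrak{H},\mathfrak{V}$, preservation of $g|_{\mathfrak{H}},g|_{\mathfrak{V}}$ along $\mathfrak{H},\mathfrak{V}$, and compatibility with $S$ — that $D\omega$ vanishes on the adapted frame, this being the only step where the metric $g$, rather than the curvature of $\mathfrak{H}$, intervenes. Everything after that is a sign count, and hypotheses (1)–(2) are tailored exactly to annihilate the three residual curvature terms; I would also want to pin down the precise reading of hypothesis (2) (orthogonality transported by $S$, equivalently the $\omega$-orthogonality of $X^h$ to $\operatorname{im}R_{\mathfrak{H}}$).
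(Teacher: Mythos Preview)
Your overall structure matches the paper's: first check $i(X^h)\omega=-df$, then reduce $L_{X^h}\omega=0$ to $i(X^h)d\omega=0$ and evaluate the latter on an adapted frame. Your verification of $i(X^h)\omega=-df$ is correct and is essentially the same as the paper's (the paper does it in reverse order, building $X^h$ from $f$, but the content is identical). Your computation of $d\omega(X^h,Y^h,Z^h)$ via the connection formula, together with the use of hypotheses (1)--(2), also matches the paper exactly, including your correct observation that hypothesis (2) must really be read as the $\omega$-orthogonality (equivalently, the $g$-orthogonality of $SX^h$ to $\mathrm{im}\,R_{\mathfrak{H}}$).

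The gap is precisely the step you flag as the ``main obstacle'': the claim that the mixed terms $(D_{\cdot}\omega)(\text{hor},\text{ver})$ vanish. The characterization of $D$ in the paper gives preservation of $g|_{\mathfrak{H}}$ along $\mathfrak{H}$ and of $g|_{\mathfrak{V}}$ along $\mathfrak{V}$, \emph{not} preservation of $g|_{\mathfrak{V}}$ along $\mathfrak{H}$; and ``compatibility of $D$ with $S$'' is nowhere stated and is not part of the characterization. A local-coordinate check shows that, for an arbitrary choice of horizontal bundle, $(D_{X_k}\omega)(X_i,\partial/\partial y^j)=\partial_{x^k}\gamma_{ij}-\Gamma^m_{ki}\gamma_{mj}-(\partial t^m_k/\partial y^j)\gamma_{im}$, which does not vanish unless the connection coefficients $t^m_k$ of $\mathfrak{H}$ are tied to the Christoffel symbols of $\gamma$. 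So the blanket claim ``$D\omega=0$ on the adapted frame'' is false in general, and your argument for the non-horizontal cases does not go through as written.

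The paper avoids this problem entirely. It does \emph{not} attempt to compute $d\omega$ on mixed arguments via the connection. Instead, it invokes the earlier results: formulas (\ref{auxrm1}), (\ref{auxrm2}) from the proof of Proposition~\ref{propHv} and part~1 of Corollary~\ref{verteinH} (every vertical tangent vector is Hamiltonian, hence annihilates $d\omega$) to obtain $i(X^h)d\omega(Y^v,Z^v)=0$ and $i(X^h)d\omega(Y^h,Z^v)=0$ directly. Only the all-horizontal case is then handled with the connection formula, and there the $D\omega$ terms vanish trivially because $\omega|_{\mathfrak{H}}=0$ and $D$ preserves $\mathfrak{H}$. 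You should replace your mixed-term argument with this appeal to the earlier results; the rest of your proof then stands.
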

\begin{proof} $S'$ is defined by (\ref{S'}). We begin by discussing conditions ensuring that a horizontal vector field $X^h$ is a local-Hamiltonian field.
Since $d\omega$ is skew-symmetric and only the point-values of the arguments count, equalities (\ref{auxrm1}), (\ref{auxrm2}) and part 1 of Corollary \ref{verteinH} yield
$$i(X^h)d\omega(Y^v,Z^v)=0,\, i(X^h)d\omega(Y^h,Z^v)=0.$$
On the other hand, using the definition of the connection $D$ and the expression of its torsion , we get
$$ i(X^h)d\omega(Y^h,Z^h)=-
\sum_{Cycl(X,Y,Z)}\omega(R_{ \mathfrak{H}}(X^h,Y^h),Z^h).
$$
Then, using the relation between $\omega$ and $g$ included in the definition of $g$, we see that any horizontal vector field $X^h$ that satisfies conditions 1), 2) stated in the proposition also satisfies the condition $i(X^h)d\omega(Y^h,Z^h)=0$, therefore, we have $i(X^h)d\omega(Y,Z)=0$, $Y,Z\in\Gamma TM$.

Furthermore, for the horizontal vector $X^h$ we have $i(X^h)\omega=(\flat_g(SX^h)$. Thus, in order to have a horizontal, locally Hamiltonian vector field we must add the condition $d[(\flat_g(SX^h)]=0$, equivalently, $SX^h=-\sharp_g(df)$for some local differentiable functions $f$.

Now, we translate the previous conditions into conditions that characterize $\omega$-Hamiltonian functions with a horizontal Hamiltonian vector field. Firstly, we have to ask the vector field $grad_gf$ to be vertical, which ensures the existence of a horizontal field $X^h$ such that $SX^h=-\sharp_g(df)$. This happens iff $f$ is a first integral of the horizontal distribution $ \mathfrak{H}$. Then, if we also ask the corresponding field $X_h$ to satisfy conditions 1), 2), $f$ is Hamiltonian with Hamiltonian field $X^h$. This completes the proof of the proposition.\end{proof}

We shall end by stressing the following point: the results concerning the horizontal case may be used for any vector field $X$ on $M$ that is never vertical (thus, never zero as well), for instance, for the complete lift of a tangent vector field of $N$ without vanishing points.

Indeed, let us denote by $\{X\}$ the line spanned by $X$. Then,
$$ \mathfrak{V}\cap\{X\}^{\perp_\omega}=
(\mathfrak{V}\oplus\{X\})^{\perp_\omega}$$ is an isotropic subbundle of rank $n-1$ and, by a well known result of symplectic geometry (e.g., \cite{V2}, Theorem 2.2.4), there exist isotropic subbundles $\mathfrak{S}$ of rank $n-1$ such that
$$TM=[(\mathfrak{V}\cap\{X\}^{\perp_\omega})\oplus\mathfrak{S}]
\oplus_{\perp_\omega}\Pi.$$ Above, $\oplus_{\perp_\omega}$ denotes the direct sum of symplectic, $\omega$-orthogonal subbundles and $\Pi$ is any complementary subbundle of $\mathfrak{V}\cap\{X\}^{\perp_\omega}$ in $\mathfrak{V}\oplus\{X\}$ with $\omega|_\Pi$ non degenerate. Of course, we may choose $\Pi$ such that $X\in\Pi$ and, then, $\bar{\mathfrak{H}}=\mathfrak{S}\oplus\{X\}$ is $\omega$-Lagrangian and such that $TM=\bar{\mathfrak{H}}\oplus
\mathfrak{V}$.

It is easy to check that the almost symplectic structure $\omega$ is also associated to the pair $(\gamma,\bar{ \mathfrak{H}})$, but, the associated metric $g$ is replaced by a metric $\bar{g}$. Proposition \ref{propHh} and the connected results may be used	 for the second canonical connection of the metric $\bar g$.

Moreover, we do not have to start with the pair $(\gamma,\mathfrak{H})$, and we may apply Propositions \ref{propHv}, \ref{propHh} for any almost symplectic structure $\omega$ on $M=TN$ such that the vertical subbundle $\mathfrak{V}$ is Lagrangian. We just have to choose an auxiliary Lagrangian subbundle $\mathfrak{H}$ that is complementary to $\mathfrak{V}$ and an auxiliary metric $\gamma$ on $N$.
{\small
Department of Mathematics,
University of Haifa, Israel, vaisman@math.haifa.ac.il
}
\end{document}